\newcommand{\vect}[1]{\accentset{\rightharpoonup}{#1}}
\newtheorem{theorem}{Theorem}
\newtheorem{lemma}{Lemma}
\newtheorem{definition}{Definition}
\title{On the Jacobian Conjecture and ideal membership for degree $d$-linear maps}
\author{Mario DeFranco}
\begin{document} 

\maketitle

\abstract{We consider polynomial maps, which we call degree $d$-linear maps, that satisfy the Jacobian condition. We prove that certain infinite families of elements, which appear in the coefficients of the formal inverse of such maps, are in the ideal determined by the Jacobian condition. Using the Cayley-Hamilton theorem, we provide expressions for these elements in terms of the generators of that ideal. We also give a combinatorial proof of the Cayley-Hamilton theorem similar to that of Straubing \cite{Straubing}. We also include results of Gr\"obner basis computations regarding other elements.}

\section{Introduction}\label{intro}
The Jacobian Conjecture is a problem in algebraic geometry first stated by Ott-Heinrich Keller in 1939 (see \cite{Keller} and van den Essen \cite{van den Essen}). It has several formulations, and in this paper we consider the following.

For an integer $n \geq 1$, let $x = (x_1, \dots, x_n)\in \mathbb{C}^n$, define a polynomial map to be a map of the form  
\begin{align*}
&f \colon \mathbb{C}^n \rightarrow \mathbb{C}^n\\ 
&(x_1, \ldots, x_n) \mapsto (f_1(x_1, \ldots, x_n), \ldots, f_n(x_1, \ldots, x_n)  )
\end{align*}
where each component function $f_i(x_1, \ldots, x_n)$ is a polynomials function.

\textbf{Jacobian Conjecture}: Let $n \geq 2$. Suppose $f(x)$ is a polynomial map such that 
\begin{equation}\label{Jac condition}
\mathrm{Jac}(f)(x) := \det(D(f)(x)) = 1 
\end{equation}
for all $x \in \mathbb{C}^n$, where $\mathrm{Jac}(f)$ is the Jacobian of $f(x)$ and $D(f)(x)$ is the differential of $f(x)$. Then $f(x)$ has an inverse function that is also a polynomial map.  

Polynomial automorphisms appear in many topics in algebraic geometry and dynamical systems (see van den Essen \cite{van den Essen}). The Jacobian conjecture itself is greatly generalized by the Generalized Vanishing Conjecture (see van den Essen and Zhao \cite{van den Essen 08}). A connection has also been proposed to quantum field theory (see Abdesselam \cite{Abdesselam}). 

One approach to the Jacobian Conjecure is to consider the formal inverse of $f(x)$ (see Bass, Connell, Wright \cite{Bass}, Johnston \cite{Johnston 21}, Singer \cite{Singer}, Wright \cite{Wright}). This is \textit{a priori} an infinite power series in the indeterminates $x_i$ which may not converge on all of $\mathbb{C}^n$. If the condition \eqref{Jac condition} implies that  the coefficients of the monomials of the $x_i$ were zero for all but finitely many monomials, then the formal series would truncate to a polynomial, which would be the inverse function of $f(x)$, convergent everywhere. The Jacobian conjecture thus claims that the vanishing of the non-constant coefficients of the Jacobian \eqref{Jac condition} implies the vanishing of almost all of the coefficients of the formal inverse. 

In this paper we analyze the formal inverse of certain polynomial maps we call \newline degree $d$-linear maps:
\begin{definition}
For integers $d,n\geq1$, call a map $f$ 
\[
f \colon \mathbb{C}^n \rightarrow \mathbb{C}^n
\]
a degree $d$-linear map if the $i$-th component function is of the form 
\[
f_i(x_1, \ldots, x_n) = x_i- (\sum_{j=1}^n a_{i,j}x_r)^d
\]
for $a_{i,j}\in \mathbb{C}$. 
\end{definition}
To prove the Jacobian conjecture for all $n$, it is sufficient to prove it for these maps for all $n$ when $d=3$ (Dru\.zkowski \cite{Druzkowski 83}). In that case these maps are called cubic-linear.  

Now we describe the layout of this paper. We treat the $a_{i,j}$ as indeterminates that generate a ring $R$. The non-constant coefficients of $\mathrm{Jac}(f)(x)$ then generate an ideal $J_{d,n}$ of $R$. In Section \ref{formal inverse} we express each coefficient of the formal inverse as a sum of elements of $R$ over rooted plane trees. In Lemma \ref{l fern factorization}, we show that portions of these sums factorize using certain elements of $R$ which we denote by $\mathrm{fern}_{d,n}$, taking this terminology from Johnston and Prochno \cite{Johnston 19}. In Theorem \ref{t membership} we prove that certain infinite families of these elements are in $J_{d,n}$, and we express them in the basis of the generators of that ideal. We prove this formula by showing it is an instance of the Cayley-Hamilton theorem applied to a certain matrix. We note that Abdesselam \cite{Abdesselam} interprets the ``$d$=1'' Jacobian conjecture as the Cayley-Hamilton theorem. Thus Theorem \ref{t membership} generalizes that result to all $d$. We give a combinatorial proof of the Cayley-Hamilton theorem in Section \ref{CH} and note that it is very similar to that of Straubing \cite{Straubing}. Finally in Section \ref{groebner} we list some computations done with Groebner bases to check more ideal membership questions. We specify certain elements that are not in $J_{d,n}$ but are in its radical, and also consider further nilpotent assumptions of Dru\.zkowski \cite{Druzkowski 01}.  

\section{Formal inverse} \label{formal inverse}
We express the formal inverse of a degree $d$-linear map using rooted plane trees \newline (Lemma \ref{g form}). First we present definitions. 

\begin{definition} 
Let $a_{i,j}, 1 \leq i,j \leq n$ be indeterminates that generate a polynomial algebra $R$ over $\mathbb{Q}$. $R$ is a graded ring by total degree of a monomial. Let $A$ denote the $n\times n$ matrix 
\[
A_{i,j} = a_{i,j}.
\]
\end{definition}
Monomials in the ring $R$ will be used as weights for rooted plane trees.

\begin{definition} 
A rooted plane tree $T$ is an acyclic graph with a distinguished vertex called the root such that the children of each vertex are ordered, where a vertex $u$ of $T$ is a child of $v$ if $u$ is adjacent to $v$ and $v$ is on the path from $u$ to the root. Let $V(T)$ denote the vertex set of $T$. 

Define a labeled tree to be a pair $(T,\lambda)$ where $T$ is rooted plane tree and $\lambda$ is a function 
\[
\lambda \colon V(T) \rightarrow [1,n] 
\] from the vertex set of $T$ to the integers in $[1,n]$. Define a root-leaf labeled tree to be a pair $(T, \mu)$ where $T$ is a rooted plane tree and $\mu$ is a function 
\[
\mu \colon \{ \text{root and leaves of $T$} \} \rightarrow [1,n]. 
\]
We say that a labeled tree agrees with a root-leaf labeled tree if they have the same rooted plane tree and $\lambda = \mu$ when restricted to the root and leaves. 
Define $L(T,\mu)$ to the set of all labeled trees that agree with the root-leaf labeled tree $(T,\mu)$. 
 For a labeled tree $(T,\lambda)$ and an edge $e$ of $T$ going from parent vertex $v$ to child vertex $u$, define $w(e,\lambda)\in R$ to be
\[
w(e,\lambda) = a_{\lambda(v),\lambda(u)}.
\]
Define 
\[
w(T,\lambda)  = \prod_{e \in T} w(e, \lambda),
\]
and for a root-leaf labeled tree $(T,\mu)$, define
\[
z(T, \mu) = \sum_{(T,\lambda) \in L(T,\mu)} w(T,\lambda)
\]

\end{definition}



\begin{lemma} \label{l g form}
Suppose $f(x)$ is a degree $d$-linear map.
Let $g(x)$ denote the formal inverse of $f$, so as formal power series
\[
f_i( g_1(x_1, \ldots, x_n), \ldots, g_n(x_1, \ldots, x_n)  ) = x_i.
\] 
Then
\begin{equation}\label{g form}
g_i(x_1, \ldots, x_n) = \sum_{(T,\lambda),\, \lambda(\mathrm{root}(T)) = i} w(T) \prod_{v \in \mathrm{Leaves}(T)}  x_{\lambda(v)}
\end{equation}
where the sum is over all labeled trees with root labeled $i$ and such that every vertex of $T$ has degree $d$ or 0. 
\end{lemma}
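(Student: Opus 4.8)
The plan is to recognize $g$ as the unique solution of a fixed-point equation obtained by rearranging $f(g(x))=x$, and then to check that the tree sum on the right of \eqref{g form} satisfies that same equation; uniqueness then forces the two to coincide.

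First I would rewrite the defining relation. Since $f_i(y)=y_i-\bigl(\sum_{j}a_{i,j}y_j\bigr)^d$, the identity $f_i(g_1(x),\dots,g_n(x))=x_i$ is equivalent to the fixed-point identity
\[
g_i(x)=x_i+\Bigl(\sum_{j=1}^n a_{i,j}\,g_j(x)\Bigr)^{d},\qquad 1\le i\le n,
\]
which I will refer to as $(\star)$. Next I would observe that $(\star)$ has a unique solution among $n$-tuples of formal power series in the $x_i$ with zero constant term: for $d\ge 2$ the nonlinear term on the right has order $\ge d$ in the $x_i$, so its homogeneous part of $x$-degree $m$ depends only on the parts of $g$ of $x$-degree $1,\dots,m-1$, and a degree induction (with $g_i^{(1)}=x_i$) determines every homogeneous component; for $d=1$ the same argument works after passing to the completion $\hat R$ of $R$ by total degree, where $I-A$ is invertible. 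It therefore suffices to prove that the series
\[
G_i(x)\;:=\;\sum_{(T,\lambda):\,\lambda(\mathrm{root}(T))=i} w(T,\lambda)\prod_{v\in\mathrm{Leaves}(T)}x_{\lambda(v)}
\]
(sum over labeled trees with root labeled $i$ in which every vertex has exactly $d$ children or is a leaf) has zero constant term and satisfies $(\star)$.

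For well-definedness and the vanishing of the constant term I would note a simple count: such a tree with $m$ internal vertices has $m(d-1)+1$ leaves and $md$ edges, so every monomial appearing in $G_i$ has positive $x$-degree, and (when $d\ge 2$) only finitely many trees contribute to any given monomial, so $G_i\in R[[x]]$ with $G_i(0)=0$. For $(\star)$ I would split the defining sum of $G_i$ according to the root of $T$: either $T$ is the single-vertex tree with root labeled $i$, contributing the monomial $x_i$ (empty product of edge weights, the root being the unique leaf); or the root has exactly $d$ ordered children, which are the roots of subtrees $T^{(1)},\dots,T^{(d)}$, each again of the admissible type, with the children labeled $j_1,\dots,j_d$. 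In the second case
\[
w(T,\lambda)=\prod_{k=1}^d a_{i,j_k}\cdot\prod_{k=1}^d w\bigl(T^{(k)},\lambda\bigr),\qquad
\prod_{v\in\mathrm{Leaves}(T)}x_{\lambda(v)}=\prod_{k=1}^d\ \prod_{v\in\mathrm{Leaves}(T^{(k)})}x_{\lambda(v)},
\]
and summing independently over the $d$ subtrees and over $j_1,\dots,j_d\in[1,n]$ factors this contribution as $\bigl(\sum_{j=1}^n a_{i,j}\,G_j(x)\bigr)^d$. Hence $G_i(x)=x_i+\bigl(\sum_j a_{i,j}G_j(x)\bigr)^d$, i.e. $G$ satisfies $(\star)$, and by uniqueness $g_i=G_i$, which is \eqref{g form}.

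I do not expect a serious obstacle; the proof is essentially bookkeeping. The one point that must be handled carefully is the root-decomposition step: the plane (ordered) structure on the children is precisely what makes the $d$-fold product $\bigl(\sum_j a_{i,j}G_j\bigr)^d$ appear with the correct multiplicities, and one must verify that the weight $w(T,\lambda)$ and the leaf-monomial both factor cleanly and compatibly along the decomposition. A secondary technical point is ensuring the (possibly infinite) tree sums are read in the appropriate completed power-series ring, which matters only when $d=1$.
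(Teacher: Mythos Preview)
Your proposal is correct and follows essentially the same approach as the paper: both identify the single-vertex tree with the term $x_i$ and use the root decomposition of the remaining trees into an ordered $d$-tuple of subtrees to factor their total contribution as $\bigl(\sum_j a_{i,j}G_j\bigr)^d$, thereby verifying $f_i(G)=x_i$. Your version is simply more explicit about well-definedness and the uniqueness step, which the paper leaves implicit.
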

\begin{proof}
 Let $\tilde{g}(x_1, \ldots, x_n)$ denote the right side of \eqref{g form}. The term $x_i$ appears in the series for $\tilde{g}(x_1, \ldots, x_n)$ because this term corresponds to tree with one vertex, its root, labeled $i$, which is also a leaf. Any other tree $T$ in this series has root of degree $d$ and is thus determined by a $d$-tuple of the subtrees whose root is a child of the root of $T$. Therefore the sum of all these other terms in this series has the factored form 
 \[
(\sum_{j=1}^n a_{i,j}\tilde{g}_j(x_1, \ldots, x_n))^d,
 \]
and so 
\[
f_i(\tilde{g}(x_1, \ldots, x_n)) = x_i.
\] 
 This completes the proof.  
\end{proof}

\begin{definition}
Let the height of a tree $T$ denote the maximal number of edges in a path from the root of $T$ to one of its leaf vertices. Denote the height by $h(T)$. If $h(t)\geq n$, then define the leftmost $n$-path $LP_n(T)$ of $T$ to be the leftmost path front the root that consists of $n$ edges. Given a labeling $\lambda$ of $T$, let $K(T,\lambda)$ denote the set of all labelings $\lambda'$ such that $\lambda'$ agrees with $\lambda$ on all vertices except possibly on those in the interior of $LP_n(T)$. \end{definition} 

We will consider certain trees we denote $\mathrm{fern}_{d,n}$ using the terminology of Johnston and Prochno \cite{Johnston 19}. \begin{definition} 
Define $\mathrm{fern}_{d,n}$ to be a rooted plane tree that consists of $n$ vertices of degree $d$ or 0 such that each vertex of degree $d$ has at most one child of degree $d$, and if such a child exists it is the leftmost of all the children of its parent.
\end{definition} 

We therefore obtain a partition of the set of labelings of $T$ into equivalence classes with each class consisting of those labelings that agree on all vertices except possibly those on the interior of $LP_n(T)$. 
\begin{definition}
We say a rooted plane tree $T$ is $d$-regular if its vertices have degree either $d$ or 0. For a rooted plane tree $T$ of height at least $n$, each of the above mentioned equivalence classes determines a labeling $\lambda_{\mathrm{fern}}$ of the tree $\mathrm{fern}_{d,n}$: the leaf vertices of $\mathrm{fern}_{d,n}$ correspond to the vertices that are the children of each vertex in $LP_n(T)$.
\end{definition}

 
\begin{lemma}\label{l fern factorization} Let $T$ be a rooted plane tree with height at least $n$ and labeling $\lambda$. Then
\[
\sum_{\eta \in K(T,\lambda)} w(T,\eta) = z(\mathrm{fern}_{d,n}, \lambda_{\mathrm{fern}})g  
\]
for some $g \in R$ that depends on $\lambda$. 
\end{lemma}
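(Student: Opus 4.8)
The plan is to show that relabeling the interior vertices of $LP_n(T)$ affects only the edge weights incident to those vertices, that the edges so affected (together with the edges attaching the spine to the hanging subtrees) are exactly the edges of $\mathrm{fern}_{d,n}$ carrying the labeling $\lambda_{\mathrm{fern}}$, and that every remaining edge weight is untouched and hence assembles into the constant $g$.

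First I would fix notation: write $v_0 = \mathrm{root}(T), v_1, \ldots, v_n$ for the consecutive vertices of $LP_n(T)$, so that $v_1, \ldots, v_{n-1}$ are its interior vertices and, by definition of $K(T,\lambda)$, every $\eta \in K(T,\lambda)$ satisfies $\eta(v) = \lambda(v)$ for all $v \notin \{v_1, \ldots, v_{n-1}\}$. I would then partition $E(T)$ into $E_{\mathrm{f}}$, consisting of the $n$ edges of $LP_n(T)$ together with every edge joining some $v_k$ ($0 \le k \le n-1$) to a child of $v_k$ not lying on $LP_n(T)$, and $E_{\mathrm{r}} = E(T) \setminus E_{\mathrm{f}}$. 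The point of this partition is that every edge of $E_{\mathrm{r}}$ lies strictly inside one of the subtrees hanging off the path (the subtree rooted at a non-path child of some $v_k$, or the subtree rooted at $v_n$), hence is incident to no vertex of $\{v_1,\ldots,v_{n-1}\}$; consequently $w(e,\eta) = w(e,\lambda)$ for every $e \in E_{\mathrm{r}}$ and every $\eta \in K(T,\lambda)$. Setting $g := \prod_{e \in E_{\mathrm{r}}} w(e,\lambda) \in R$ (manifestly depending only on $\lambda$), we obtain $w(T,\eta) = g \cdot \prod_{e \in E_{\mathrm{f}}} w(e,\eta)$ for all such $\eta$.

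Next I would identify the subgraph carried by $E_{\mathrm{f}}$, after contracting each hanging subtree to its root, with $\mathrm{fern}_{d,n}$: this exhibits $v_0, \ldots, v_{n-1}$ as the degree-$d$ vertices of a $d$-regular plane tree in which each has exactly one degree-$d$ child, namely $v_{k+1}$, placed leftmost, which is precisely the defining shape of $\mathrm{fern}_{d,n}$; its leaves are the non-path children of the $v_k$'s together with $v_n$, and its interior vertices are exactly $v_1, \ldots, v_{n-1}$. Under this identification the labeling that $\lambda$ induces on the root and leaves (root $\mapsto \lambda(v_0) = i$; a leaf coming from a non-path child $u \mapsto \lambda(u)$; the leaf coming from $v_n \mapsto \lambda(v_n)$) is exactly $\lambda_{\mathrm{fern}}$ — and these are precisely the vertices whose $\eta$-labels are frozen across $K(T,\lambda)$, so $\lambda_{\mathrm{fern}}$ is well defined on the whole equivalence class. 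Hence restriction of $\eta$ to the vertices incident to $E_{\mathrm{f}}$ is a bijection $K(T,\lambda) \to L(\mathrm{fern}_{d,n},\lambda_{\mathrm{fern}})$, and along it $\prod_{e \in E_{\mathrm{f}}} w(e,\eta) = w(\mathrm{fern}_{d,n},\lambda')$ for the corresponding fern-labeling $\lambda'$. Summing over $\eta$ then gives
\[
\sum_{\eta \in K(T,\lambda)} w(T,\eta) = g \sum_{\eta \in K(T,\lambda)} \prod_{e \in E_{\mathrm{f}}} w(e,\eta) = g \!\!\sum_{(\mathrm{fern}_{d,n},\lambda') \in L(\mathrm{fern}_{d,n},\lambda_{\mathrm{fern}})}\!\! w(\mathrm{fern}_{d,n},\lambda') = z(\mathrm{fern}_{d,n},\lambda_{\mathrm{fern}})\, g .
\]

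I expect the only real work to be the combinatorial bookkeeping of the middle step: checking that the "leftmost $n$-path" structure of $T$ matches the "at most one degree-$d$ child, placed leftmost" structure of $\mathrm{fern}_{d,n}$ on the nose, and pinning down the treatment of the bottom vertex $v_n$ (whether it enters the fern as a leaf or as a degree-$d$ vertex with $d$ leaf children) so that the edge set and the induced root-leaf labeling come out exactly right. Once the identification of $E_{\mathrm{f}}$ with $\mathrm{fern}_{d,n}$ is nailed down, the algebraic factorization is immediate.
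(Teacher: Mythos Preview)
Your proof is correct and follows essentially the same approach as the paper: partition the edges of $T$ into those adjacent to $v_0,\ldots,v_{n-1}$ (your $E_{\mathrm{f}}$, which the paper describes as ``adjacent to a vertex in the interior of $LP_n(T)$ or to the root'') and the rest, then observe that the first set carries exactly the fern structure while the second set contributes the fixed factor $g=\prod_{e\in E_{\mathrm{r}}}w(e,\lambda)$. The paper's proof is a one-line ``immediate from the construction'' together with that same formula for $g$; your write-up simply unpacks the bijection $K(T,\lambda)\to L(\mathrm{fern}_{d,n},\lambda_{\mathrm{fern}})$ explicitly, and your uncertainty about $v_n$ is resolved the way you set it up (it enters the fern as a leaf, with the subtree below $v_n$ absorbed into $g$).
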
 
\begin{proof} 
This follows immediately from the construction. The element $g$ is 
\[
g = \prod_{e} w(e, \lambda)
\]
where the product is over all edges of $T$ that are not adjacent to a vertex in the interior of $LP_n(T)$ or to the root of $T$. 
\end{proof} 


\section{Ideal membership}\label{ideal membership}

In this section we prove Theorem \ref{t membership} that certain elements are in the ideal determined by the Jacobian condition \eqref{Jac condition}.

\begin{definition} Let $\mathrm{Jac}(f)$ denote the Jacobian of $f$: 
\[
\mathrm{Jac}(f) = \det(D(f))
\]
where
\[
(D(f))_{i,j} = \frac{d}{dx_j}( f_i(x)). 
\]
Now $\mathrm{Jac}(f) \in R[x_1, \ldots, x_n]$. For a mult-degree $\alpha$, we let $J_\alpha \in R$ be the coefficient of $x^\alpha$:
\[
\mathrm{Jac}(f) = \sum_{\alpha} J_\alpha x^\alpha.
\]

Suppose $d \geq 2$. Let $J$ denote the ideal generated by the $J_\alpha$ for all $\alpha \neq \vect{0}$, i.e. by all coefficients of $\mathrm{Jac}(f)$ except for the constant term. 

If $d=1$, then $\mathrm{Jac}(f) \in R$, and we let $J$ be the ideal generated by all homogeneous components of $\mathrm{Jac}(f)$ except for the constant term. 
\end{definition}

\begin{lemma} \label{l J alpha form}
For integers $1\leq k,l\leq n$, let $\alpha(k,l) = (\alpha_1, \ldots, \alpha_n)$ be a multi-degree such that $\alpha_l=k(d-1)$ for some $1\leq k \leq n-1$, and $\alpha_i=0$ for all other $i$. Then 
\[
J_{\alpha(k,l)} = (-d)^k \sum_{(i_1, \ldots, i_k)} |A|_{(i_1, \ldots, i_k)} \prod_{r=1}^k a_{i_r,l}^{d-1}
\] 
where the sum is over all $k$-tuples $(i_1, \ldots, i_k)$ of distinct integers in $[1,n]$ with $i_r< i_{r+1}$, and where $|A|_{(i_1, \ldots, i_k)}$ denotes the principal minor of $A$ corresponding to rows $i_1, \ldots, i_k$. 
\end{lemma}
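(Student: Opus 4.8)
The plan is to compute the Jacobian determinant $\mathrm{Jac}(f) = \det(D(f))$ directly for a degree $d$-linear map and then extract the coefficient of the monomial $x_l^{k(d-1)}$. First I would record that for $f_i(x) = x_i - (\sum_j a_{i,j}x_j)^d$ one has
\[
(D(f))_{i,j} = \delta_{i,j} - d\,a_{i,j}\Bigl(\textstyle\sum_{m=1}^n a_{i,m}x_m\Bigr)^{d-1},
\]
so if we write $L_i := \sum_m a_{i,m}x_m$ and $u_i := -d\,L_i^{d-1}$, the matrix $D(f)$ is $I + \mathrm{diag}(u_1,\dots,u_n)\cdot A$, i.e. its $(i,j)$ entry is $\delta_{i,j} + u_i a_{i,j}$. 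Now expand $\det(I + U A)$, where $U = \mathrm{diag}(u_i)$, by the standard principal-minor expansion: $\det(I+M) = \sum_{S\subseteq[1,n]} \det(M_S)$, the sum over all subsets $S$ with $M_S$ the principal submatrix on rows and columns indexed by $S$. For $M = UA$ we have $\det(M_S) = \bigl(\prod_{i\in S} u_i\bigr)\det(A_S) = \bigl(\prod_{i\in S}u_i\bigr)|A|_S$, since the diagonal $U$ just scales each row. Hence
\[
\mathrm{Jac}(f) = \sum_{S\subseteq[1,n]} |A|_S \prod_{i\in S} u_i
 = \sum_{S\subseteq[1,n]} (-d)^{|S|}\,|A|_S \prod_{i\in S} L_i^{d-1}.
\]

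The remaining step is purely combinatorial bookkeeping in the polynomial ring $R[x_1,\dots,x_n]$: isolate the coefficient of $x^{\alpha(k,l)} = x_l^{k(d-1)}$. A term indexed by $S$ contributes $(-d)^{|S|}|A|_S \prod_{i\in S}(\sum_m a_{i,m}x_m)^{d-1}$; expanding each factor, the only way to obtain a monomial that involves $x_l$ only and to total degree exactly $k(d-1)$ is to pick the $x_l$-term from every one of the $d-1$ copies in each of the $|S|$ factors, which forces $|S| = k$ and yields $\prod_{i\in S} a_{i,l}^{d-1}$. Writing $S = \{i_1 < \dots < i_k\}$ gives exactly
\[
J_{\alpha(k,l)} = (-d)^k \sum_{i_1<\dots<i_k} |A|_{(i_1,\dots,i_k)} \prod_{r=1}^k a_{i_r,l}^{d-1},
\]
which is the claimed formula. (In the edge case $k = 0$ the sum is the empty subset, giving the constant term $1$, consistent with $\mathrm{Jac}(f) = 1 + \dots$; the statement restricts to $1 \le k \le n-1$.)

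I expect the only genuine subtlety — rather than an obstacle — to be the principal-minor expansion identity $\det(I + M) = \sum_{S} \det(M_S)$ together with the reduction $\det((UA)_S) = \bigl(\prod_{i\in S} u_i\bigr)|A|_S$; both are standard (the first is the Cauchy–Binet / elementary-symmetric-function-of-eigenvalues identity, the second uses multilinearity of the determinant in rows), so the write-up will mostly consist of making the coefficient extraction precise and checking the index conventions ($i_r < i_{r+1}$, the sign $(-d)^k$, and the constraint $k(d-1) \le (n-1)(d-1)$ coming from $|S| \le n$).
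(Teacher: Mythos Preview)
Your proposal is correct and follows essentially the same route as the paper: both compute $(D(f))_{i,j}=\delta_{i,j}-d\,a_{i,j}L_i^{d-1}$, expand the determinant by choosing from each row either the diagonal $1$ or the degree-$(d-1)$ term, and identify the choice of $k$ rows with a principal minor of $A$ times $\prod_{r} a_{i_r,l}^{d-1}$. Your packaging via $D(f)=I+UA$ and the identity $\det(I+M)=\sum_S\det(M_S)$ is just a clean restatement of that same expansion; the only minor slip is the parenthetical remark that $|S|\le n$ yields $k\le n-1$ (it gives $k\le n$, and indeed the formula holds for $k=n$ too), but this does not affect the proof.
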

\begin{proof}
We have 
\begin{equation} \nonumber
(D(f))_{i,j} = \delta_{i,j} - da_{i,j}(\sum_{r=1}^n a_{i,r}x_r)^{d-1}
\end{equation}
where $\delta_{i,j}$ is the Kronecker delta. Apart from $\delta_{i,j}$, each entry of $D(f)$ is thus homogeneous of degree $d-1$ in the variables $x_i$, and in the expansion of the determinant each term of $x^{k(d-1)}$ is obtained by choosing 
\begin{equation}\label{x choice}
-da_{i,j}(\sum_{r=1}^n a_{i,r}x_r)^{d-1}
\end{equation}
$k$ times and choosing 1 $n-k$ times in the main diagonal. The $k$ rows $i_1, \ldots, i_k$ from which a term \eqref{x choice} was chosen determine the rows of a principal minor; the product of the coefficients $a_{i,j}$ outside the sum at \eqref{x choice} summed over those rows yield the principal minor $|A|_{(i_1, \ldots, i_k)}$, and taking the coefficient of $x_l$ from \eqref{x choice} yields the product $\displaystyle \prod_{r=1}^k a_{i_r,l}^{d-1}$. This completes the proof. 
\end{proof}


\begin{definition}For integers $i,j,l \in [1,n]$ not necessarily distinct, let $\mu(i,j,l)$ denote the root-leaf labeling of $\mathrm{fern}_{d,n}$ such that the root is labeled $i$, the vertex $v_n$ (the last vertex in the leftmost path) is labeled $j$, and all other vertices are labeled $l$. 
\end{definition}

\begin{lemma}\label{z matrix entry}For integers $i,j,l \in [1,n]$,
\[
z(\mathrm{fern}_{d,n}, \mu(i,j,l)) = (B^n)_{i,j}
\]
where $B$ is the $\times n$ matrix 
\[
B_{i,j} = a_{i,j} a_{i,l}^{d-1}.
\]
\end{lemma}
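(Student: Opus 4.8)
The plan is to unwind the definition of $z(\mathrm{fern}_{d,n}, \mu(i,j,l))$ as a sum over labeled trees agreeing with $\mu(i,j,l)$ on the root and leaves, and to recognize the result as the $(i,j)$ entry of the $n$-th power of $B$. The key structural observation is that $\mathrm{fern}_{d,n}$ consists of a single distinguished path $v_0 = \mathrm{root}, v_1, \ldots, v_n$ of length $n$ (the leftmost path), where each $v_r$ with $r < n$ has degree $d$: one child $v_{r+1}$ along the path and $d-1$ children that are leaves; and $v_n$ is itself a leaf. Under $\mu(i,j,l)$ the root $v_0$ is forced to label $i$, the vertex $v_n$ is forced to label $j$, and every leaf hanging off the path is forced to label $l$. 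The only free labels in a labeling $\lambda \in L(\mathrm{fern}_{d,n}, \mu(i,j,l))$ are the interior path vertices $v_1, \ldots, v_{n-1}$.

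First I would set up the bookkeeping: write $\lambda(v_r) = i_r$ for $r = 1, \ldots, n-1$, with $i_0 := i$ and $i_n := j$. Then $w(\mathrm{fern}_{d,n}, \lambda) = \prod_{e} w(e,\lambda)$ factors as a product over the $n$ path edges $v_{r-1} \to v_r$, contributing $a_{i_{r-1}, i_r}$, times a product over the leaf edges: vertex $v_{r-1}$ (for $r = 1, \ldots, n$) has $d-1$ leaf children, each labeled $l$, contributing $a_{i_{r-1}, l}^{d-1}$. Grouping the edge $v_{r-1}\to v_r$ with the $d-1$ leaf edges at $v_{r-1}$ gives exactly the factor $a_{i_{r-1}, i_r} \, a_{i_{r-1}, l}^{d-1} = B_{i_{r-1}, i_r}$. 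Hence $w(\mathrm{fern}_{d,n}, \lambda) = \prod_{r=1}^n B_{i_{r-1}, i_r}$, and summing over all choices of $i_1, \ldots, i_{n-1} \in [1,n]$ yields $\sum_{i_1, \ldots, i_{n-1}} B_{i, i_1} B_{i_1, i_2} \cdots B_{i_{n-1}, j} = (B^n)_{i,j}$ by the definition of matrix multiplication.

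The main thing to get right is the precise combinatorial description of $\mathrm{fern}_{d,n}$ — in particular that it has exactly $n$ internal-path edges and that each path vertex except the last carries precisely $d-1$ leaf children, so that the leaf weights distribute one factor $a_{i_{r-1},l}^{d-1}$ per row index $i_{r-1}$ with $r$ ranging over $1, \ldots, n$. One must be slightly careful about the endpoints: $v_n$ is a leaf (degree $0$), so it contributes no leaf children of its own, which is why the product of leaf-weight factors runs over $r = 1, \ldots, n$ (indices $i_0, \ldots, i_{n-1}$) rather than through $i_n$; this matches the $n$ factors $B_{i_{r-1}, i_r}$ exactly. Modulo that indexing check, the identity is essentially immediate from the factorization, so I do not anticipate a substantive obstacle beyond stating the definition of $\mathrm{fern}_{d,n}$ carefully enough to make the edge-grouping transparent.
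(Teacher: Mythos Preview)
Your proposal is correct and follows essentially the same approach as the paper: both identify the free labels as the interior path vertices $v_1,\ldots,v_{n-1}$, group the path edge $v_{r-1}\to v_r$ with the $d-1$ leaf edges at $v_{r-1}$ to produce the factor $B_{i_{r-1},i_r}$, and then recognize the sum as $(B^n)_{i,j}$. Your write-up is in fact more explicit than the paper's, which compresses the whole computation into a single displayed formula.
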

\begin{proof}
By construction 
\[
z(\mathrm{fern}_{d,n}, \mu(i,j,l)) = \sum_{\lambda} \prod_{k=0}^{n-1}(a_{\lambda_k, \lambda_{k+1}} a_{k,l}^{d-1})
\]
where the sum is over all $(n+1)$-tuples $\lambda$ 
\[
\lambda = (\lambda_0, \ldots, \lambda_n)
\]
where $\lambda_0=i$ and $\lambda_n=j$. But this is $(B^n)_{i,j}$ by definition. This completes the proof. 
\end{proof}

\begin{theorem}\label{t membership} For integers $i,j,l \in [1,n]$,
\[
z(\mathrm{fern}_{d,n}, \mu(i,j,l)) \in J_{d,n}
\]
and 
\[
z(\mathrm{fern}_{d,n},\mu(i,j,l) = -\sum_{k=0}^{n-1} (A^k)_{i,j}d^{-(n-k)} J_{\alpha(n-k,l)}. 
\]
\end{theorem}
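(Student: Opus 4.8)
The plan is to recognize the claimed identity as the Cayley--Hamilton theorem applied to the matrix $B$ with entries $B_{i,j} = a_{i,j}a_{i,l}^{d-1}$ from Lemma \ref{z matrix entry}, after first re-expressing both sides in terms of $B$ and $A$. By Lemma \ref{z matrix entry} the left side equals $(B^n)_{i,j}$. On the right side, Lemma \ref{l J alpha form} gives
\[
J_{\alpha(n-k,l)} = (-d)^{n-k} \sum_{(i_1,\dots,i_{n-k})} |A|_{(i_1,\dots,i_{n-k})} \prod_{r=1}^{n-k} a_{i_r,l}^{d-1},
\]
and the key observation is that $|A|_{(i_1,\dots,i_m)}\prod_r a_{i_r,l}^{d-1}$ is exactly the corresponding $m\times m$ principal minor $|B|_{(i_1,\dots,i_m)}$ of $B$, since multiplying row $i_r$ of the submatrix of $A$ by the scalar $a_{i_r,l}^{d-1}$ multiplies its determinant by $\prod_r a_{i_r,l}^{d-1}$. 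Hence $\sum_{(i_1,\dots,i_m)} |A|_{(i_1,\dots,i_m)}\prod_r a_{i_r,l}^{d-1} = e_m(B)$, the $m$-th elementary-symmetric/principal-minor sum of $B$, which is (up to sign) the coefficient of the characteristic polynomial: $\det(xI - B) = \sum_{m=0}^n (-1)^m e_m(B) x^{n-m}$ with $e_0(B)=1$.

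Substituting and letting $m = n-k$, the right side becomes
\[
-\sum_{k=0}^{n-1} (A^k)_{i,j}\, d^{-(n-k)} (-d)^{n-k} e_{n-k}(B) = -\sum_{k=0}^{n-1} (A^k)_{i,j}(-1)^{n-k} e_{n-k}(B).
\]
Now I would like to replace $(A^k)_{i,j}$ by $(B^k)_{i,j}$. This is legitimate for the terms that survive: I claim each monomial appearing in $(-1)^{n-k}e_{n-k}(B)\,(A^k)_{i,j}$, once multiplied out, coincides with the corresponding monomial of $(-1)^{n-k}e_{n-k}(B)\,(B^k)_{i,j}$ — more cleanly, I would avoid this subtlety entirely by working directly with $B$: write $(A^k)_{i,j}\prod(\text{the }a^{d-1}\text{ factors from }e_{n-k}(B))$ and observe that collecting the powers of $a_{\cdot,l}^{d-1}$ along the length-$k$ walk in $A$ turns $(A^k)_{i,j}$ into $(B^k)_{i,j}$ precisely because $B_{i,j}=A_{i,j}a_{i,l}^{d-1}$ and in $e_{n-k}(B)$ every index $1,\dots,n$ that is \emph{not} in the walk contributes its $a^{d-1}$ factor — wait, that is not automatic. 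The honest route is: by Cayley--Hamilton, $\sum_{m=0}^n (-1)^m e_m(B) B^{n-m} = 0$, i.e. $B^n = -\sum_{m=1}^n (-1)^m e_m(B) B^{n-m} = -\sum_{k=0}^{n-1}(-1)^{n-k} e_{n-k}(B) B^k$. So it suffices to show
\[
\sum_{k=0}^{n-1}(-1)^{n-k} e_{n-k}(B)\,(B^k)_{i,j} = \sum_{k=0}^{n-1}(-1)^{n-k} e_{n-k}(B)\,(A^k)_{i,j}\,d^{-(n-k)}(-d)^{n-k},
\]
which reduces to $e_{n-k}(B)(B^k)_{i,j} = e_{n-k}(B)(A^k)_{i,j}$ term by term in $k$; and $e_{n-k}(B)$ is a sum over $(n-k)$-subsets $S$ with $|A|_S\prod_{r\in S}a_{r,l}^{d-1}$, while $(B^k)_{i,j} = \sum_{\text{walks}} \prod a_{\cdot,l}^{d-1}(A^k)$-walk-terms. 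The cleanest formulation is therefore: prove directly that $(B^n)_{i,j}$, expanded via its own Cayley--Hamilton recursion in powers of $B$, has each $(B^k)_{i,j}e_{n-k}(B)$ rewritten using $B_{i,j}=A_{i,j}a_{i,l}^{d-1}$, and match the powers of $d$ by noting $J_{\alpha(n-k,l)} = (-d)^{n-k}e_{n-k}(B)$ from the combination of Lemmas \ref{l J alpha form} and \ref{z matrix entry}.

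Concretely the steps are: (1) invoke Lemma \ref{z matrix entry} to get $z(\mathrm{fern}_{d,n},\mu(i,j,l)) = (B^n)_{i,j}$; (2) combine Lemma \ref{l J alpha form} with the minor identity $|A|_S\prod_{r\in S}a_{r,l}^{d-1} = |B|_S$ to get $J_{\alpha(n-k,l)} = (-d)^{n-k} e_{n-k}(B)$, hence $d^{-(n-k)}J_{\alpha(n-k,l)} = (-1)^{n-k}e_{n-k}(B)$; (3) expand $(B^k)_{i,j}$ and $(A^k)_{i,j}$ and check that $(B^k)_{i,j} = (A^k)_{i,j}\cdot(\text{monomial})$ in a way compatible with $e_{n-k}(B)$, or better, simply apply the Cayley--Hamilton theorem to $B$ in the form $B^n = -\sum_{k=0}^{n-1}(-1)^{n-k}e_{n-k}(B)B^k$ and then substitute $d^{-(n-k)}J_{\alpha(n-k,l)} = (-1)^{n-k}e_{n-k}(B)$ — but this last substitution requires $(B^k)_{i,j} = (A^k)_{i,j}$ as a polynomial identity weighted appropriately, which is false in general, so the real content is step (3). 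The main obstacle, then, is exactly this bookkeeping in step (3): showing that the extra factors $a_{\cdot,l}^{d-1}$ distribute correctly so that $d^{-(n-k)}J_{\alpha(n-k,l)}(A^k)_{i,j} = (-1)^{n-k}e_{n-k}(B)(B^k)_{i,j}$; I expect this to follow because a closed walk-plus-disjoint-subset configuration on $\{1,\dots,n\}$ — a length-$k$ walk from $i$ to $j$ together with an $(n-k)$-subset — contributes the same $a_{\cdot,l}^{d-1}$ monomial whether one attaches the $d-1$ powers to the walk edges (giving $B^k$) or not, \emph{provided} one uses the Cayley--Hamilton expansion of $B$ rather than of $A$, so that membership in $J_{d,n}$ then follows immediately from step (2) once the identity is algebraically verified.
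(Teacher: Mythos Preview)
Your approach is exactly the paper's: invoke Lemma~\ref{z matrix entry} to get $(B^n)_{i,j}$, identify the coefficients $c_k$ of $\det(tI-B)$ with $d^{-(n-k)}J_{\alpha(n-k,l)}$ via Lemma~\ref{l J alpha form} and the row-scaling identity $|B|_S=|A|_S\prod_{r\in S}a_{r,l}^{d-1}$, and then apply Cayley--Hamilton to $B$. Your steps (1) and (2) and the Cayley--Hamilton step are precisely what the paper does, and they already yield the ideal membership claim since each $c_k\in J_{d,n}$ for $0\le k\le n-1$.

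The difficulty you flag in step (3) is not a gap in your argument; it is a typo in the theorem statement. The paper's own proof ends with
\[
z(\mathrm{fern}_{d,n},\mu(i,j,l)) \;=\; -\sum_{k=0}^{n-1} (B^k)_{i,j}\,c_k \;=\; -\sum_{k=0}^{n-1} (B^k)_{i,j}\,d^{-(n-k)}J_{\alpha(n-k,l)},
\]
with $B$, not $A$, in the matrix power. The displayed formula in the theorem with $(A^k)_{i,j}$ does not follow from the argument (and, as you correctly observe, $e_{n-k}(B)(B^k)_{i,j}\neq e_{n-k}(B)(A^k)_{i,j}$ in general for $d\ge 2$). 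So you should stop after Cayley--Hamilton on $B$: that already proves both the membership assertion and the explicit expansion the paper actually derives; do not attempt the $(A^k)\leftrightarrow(B^k)$ bookkeeping, because neither you nor the paper can make it work.
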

\begin{proof}
Let $p_B(t)$ denote the characteristic polynomial of the matrix $B$ as in Lemma \ref{z matrix entry}: 
\[
p_B(t) = \det ( t I - B) = \sum_{k=0}^n c_k t^k
\]
where $c_k \in R$ and $c_n=1$. We claim that $c_k \in J_{d,n}, 0 \leq k \leq n-1$.

By expanding the determinant in the definition of the characteristic polynomial we have
\[
c_k = (-1)^{n-k} \sum_{(i_1, \ldots, i_{n-k})} |A|_{(i_1, \ldots, i_{n-k})} \prod_{r=1}^{n-k}k a_{i_r,l}^{d-1}
\]
with notation as in Lemma \ref{l J alpha form}. By that lemma 
we thus have 
\[
c_k =  d^{-(n-k)} J_{\alpha(n-k,l)}.
\]
Thus $c_k \in J_{d,n}$ for $0\leq k \leq n-1$.

Now by the Cayley-Hamilton theorem $p_B(B)= 0$, so
\[
\sum_{k=0}^n (B^k)_{i,j}c_k = 0.
\]
Since $c_n=1$, by Lemma \ref{z matrix entry} the above equation becomes
\[
z(\mathrm{fern}_{d,n},\mu(i,j,l) = -\sum_{k=0}^{n-1} (B^k)_{i,j}c_k. 
\]

 This completes the proof.
\end{proof}

\section{Combinatorial proof of Cayley-Hamilton theorem}\label{CH}
Here we include a combinatorial proof of the Cayley-Hamilton Theorem which is similar to that of Straubing \cite{Straubing}. We let $A$ denote the $n \times n$ matrix whose entry $A_{i,j}$ is the indeterminate $a_{i,j}$. 
\begin{definition}

For a $k$-tuple $\lambda= (\lambda_0,\ldots, \lambda_{k})$, we let $C(\lambda)$ be the sub-string 
\[
C(\lambda) = (\lambda_{l_1+1}, \ldots, \lambda_{l_2})
\] 
such that $\lambda_{l_1} = \lambda_{l_2}, \lambda_{i} \neq \lambda_{j}$ for $l_1<i< j$, if such a string exists. We say that $(l_1,l_2)$ are the first-rep indices of $\lambda$. 
\end{definition}

 

\begin{theorem}\label{t CH}

Define elements $J_i \in R$ by 
\[
\det(I-tA)= \sum_{i=0}^n J_i t^i.
\]
Then 
 \begin{equation}\label{t CH diag}
\sum_{i=0}^n J_i (A^{n-i})_{r,r}=0, 
 \end{equation}
 
and for $r \neq l$
 \begin{equation}\label{t CH off}
\sum_{i=0}^{n-1} J_i (A^{n-i})_{r,l}=0 .
 \end{equation}
\end{theorem}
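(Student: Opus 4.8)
The plan is to prove the Cayley--Hamilton theorem in the two-sided form stated, i.e. that the characteristic polynomial $\det(I - tA) = \sum_i J_i t^i$ annihilates $A$ entrywise, by a sign-reversing involution argument on a suitable set of weighted combinatorial objects, following Straubing. First I would expand $J_i$ as a signed sum over $i$-subsets $S \subseteq [1,n]$ of the determinant $\det(A|_S)$, and expand each such determinant as a signed sum over permutations of $S$, i.e. over disjoint collections of cycles covering $S$. Thus each term contributing to $\sum_{i=0}^n J_i (A^{n-i})_{r,l}$ is indexed by a pair $(\pi, P)$ where $\pi$ is a collection of vertex-disjoint directed cycles on some subset of $[1,n]$ and $P$ is a directed walk from $r$ to $l$ of length $n - i$ (the $(A^{n-i})_{r,l}$ factor), carrying the weight $\prod a_{\text{edges}}$ times the sign $(-1)^{i}$ times the sign of $\pi$ as a permutation. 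Because the total number of edges used is $n - i + (\text{edges in } \pi) = n - i + i = n$ in the diagonal case $r = l$ (and $n$ with one extra "deficiency" in the off-diagonal case, which is why the sum only needs to run to $n-1$), there is always a repeated vertex somewhere, which is the hook the involution uses.

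Next I would define the sign-reversing involution. The key idea of Straubing's proof is: walk along the path $P$ from $r$ until you first encounter a vertex that either repeats on $P$ or lies on one of the cycles of $\pi$; at that first "collision" one can toggle between "the collision is resolved by splicing a loop of $P$ out as a new cycle of $\pi$" and "the collision is resolved by opening up a cycle of $\pi$ and inserting it into $P$." This toggles the parity of $i$ (the number of cycle-vertices changes by the length of the spliced piece) in a way that exactly flips the overall sign $(-1)^i \operatorname{sgn}(\pi)$, while preserving the multiset of edges and hence the $R$-weight. I would need to be careful to specify a canonical rule — e.g. using the first-rep indices $C(\lambda)$ from the preceding definition to locate the first repeated block along the concatenated word $(r = \lambda_0, \lambda_1, \ldots, \lambda_{n-i} = l, \ldots)$ followed by a listing of the cycles of $\pi$ in a fixed order — so that the map is a well-defined involution with no fixed points, which forces the signed sum to vanish.

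The hard part will be pinning down the canonical collision rule so that the involution is genuinely an involution (applying it twice returns the original object) and has \emph{no} fixed points in both the diagonal and off-diagonal cases. In the diagonal case $r = l$ one must check that the "empty" configuration — $P$ the trivial length-$0$ walk at $r$ together with $\pi$ a full cycle cover, contributing $\pm\det(A)\cdot 1$ versus contributions with $i < n$ — is correctly matched rather than being an unwanted fixed point; this is where the two terms $i = n$ and $i = 0$ of \eqref{t CH diag} enter and the parameter counting $n - i + i = n$ guarantees a repeat. In the off-diagonal case one must verify that the walk from $r$ to $l \neq r$ of total combined edge-length $n$ through $n$ or fewer distinct vertices always has a collision even when only $n-1$ edges come from powers of $A$, justifying why the sum in \eqref{t CH off} truncates at $n-1$. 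I would organize the write-up as: (1) the combinatorial expansion of both sides into weighted $(\pi,P)$ pairs; (2) the pigeonhole statement that every such pair has a collision; (3) the definition of the involution via the first collision; (4) verification that it is an involution, sign-reversing, and weight-preserving, hence the sum is zero.
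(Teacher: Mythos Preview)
Your approach is essentially the paper's: expand each side as a signed sum over pairs (walk $\lambda$ from $r$ to $l$, partial cycle cover $(S,\sigma)$), invoke pigeonhole to guarantee a collision, and cancel via a sign-reversing involution that toggles a loop of $\lambda$ with a cycle of $\sigma$. The paper's involution differs from your ``first collision from the left'' only in convention: it uses the \emph{rightmost} data, namely the greatest index $h$ with $\lambda_h\in S$ together with the substring $C(\lambda)$ (which, despite the name ``first-rep,'' is actually the last-onset repeat, since the defining condition forces all entries after $l_1$ to be distinct), and then compares $h$ against $l_1$ to decide whether to splice a cycle into $\lambda$ or excise $C(\lambda)$ into $\sigma$.

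One point in your sketch would not work as written: locating the toggle by applying $C(\cdot)$ to a ``concatenated word'' consisting of $\lambda$ followed by a fixed listing of the cycles of $\pi$. Any such listing depends on which cycles are present, so it is not stable under the involution (after you move a cycle between $\lambda$ and $\sigma$, the concatenated word changes in a way that can shift where the first repeat occurs), and the map will fail to be an involution. The fix---which is exactly what both Straubing and the paper do---is to keep the two tests separate: find the canonical self-repeat position in $\lambda$ alone, find the canonical position where $\lambda$ meets $S$, and use whichever is more extreme (leftmost in Straubing's version, rightmost in the paper's) to decide the toggle. With that correction your plan goes through.
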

\begin{proof}
In the expansion of $(A^{n-i})_{u,v}$ we index each monomial by an $(n-i+1)$-tuple $\lambda$ of integers in $[1,n]$ 
\[
\lambda = (\lambda_0, \ldots, \lambda_{n-i})
\]
such that $\lambda_0=u$ and $\lambda_{n-i} = v$. 

Each $J_i$ is a sum of terms indexed by a pair $(S, \sigma)$ where $S = (s_1, s_2, \ldots, s_i)$ is a subset of $[1,n]$, with $s_j < s_{j+1}$, and $\sigma$ is a permutation of $S$. Thus 
\[
J_i = \sum_{(S,\sigma)} (-1)^{|\sigma|} \prod_{j=1}^i a_{s_j,\sigma(s_j)}
\]
where $|\sigma|$ is the number of cycles of $\sigma$: the permutation $\sigma$ has signature 
\[
\prod_{c \in \sigma}(-1)^{1+\text{length of } c} = (-1)^{i + \#\text{cycles of } \sigma}
\]
where the product is over the cycles of $\sigma$, and each factor of $t$ contributes a factor of $(-1)$. 

We expand out each term in the sums \eqref{t CH diag} or \eqref{t CH off} into a sum monomials in $R$ and construct a sign-reversing involution $I$ on those monomials. 

To a monomial in the $i$-th term we give the index 
\[
(\lambda, S, \sigma)
\]
where $|S|=i$, $\lambda$ is empty if $i=n$, and $S$ and $\sigma$ are empty if $i=0$.

Suppose $i=n$. This may only occur for the sum \eqref{t CH diag}. Then $\lambda$ is empty. Take the cycle $c$ of $\sigma$ that contains $r$: 
\[
c = (c_1, c_2, \dots c_m)
\] 
where $c_1=r$. Let $\lambda'$ be determined by $\lambda_j = c_{j+1}$ for $0\leq j \leq m-1$ and $\lambda_m = r$. Let $\sigma'$ be the permutation that results by removing the cycle $c$ from $\sigma$ and let $S'$ be the resulting set. Then $I$ maps
\[
(\lambda, S, \sigma) \mapsto (\lambda', S', \sigma').
\]

Now suppose $i \neq n$, so $\lambda$ is not empty. Let $h$ be the greatest integer such that $\lambda_{h} \in S$. 

If no such $h$ exists, then 
we claim $C(\lambda)$ exists. If $\lambda_0 = \lambda_{n-i}$, then $C(\lambda)$ exists because we have at least one pair of indices  $l_1,l_2$ where $\lambda_{l_1} = \lambda_{l_2}$, namely $l_1=0$ and $l_2 = n-i$. If $\lambda_0 \neq \lambda_{n-i}$ and $C(\lambda)$ does not exist, then the $n-i+1$ integers in $\lambda$ are distinct, and since $h$ does not exist by assumption, this set and $S$ (which consists of $i$ distinct integers by construction) are disjoint. This contradicts the fact that the integers are in $[1,n]$. This proves the claim. Adjoin this cycle $C(\lambda)$ to $\sigma$ to obtain a permutation $\sigma'$ on the resulting set $S'
$, and remove $C(\vect{\lambda})$ from $\vect{\lambda}$ to obtain a labeling $\lambda'$ on $P_{j}$. Then $I$ maps
\[
( \lambda, S, \sigma) \mapsto ( \lambda', S', \sigma').
\]

Now suppose such an $h$ does exist and consider the cycle $c$ of $\sigma$
\[
c = (c_1, \ldots, c_m)
\]
where $c_1 = \lambda_h$. If $C(\lambda)$ exists, then let $(l_1, l_2)$ be the first-rep indices. If $C(\lambda)$ exists and $h>l_1$, or if $C(\lambda)$ does not exist, then let $\lambda'$ be 
\[
\lambda' = (\lambda_0, \ldots, \lambda_h, c_2, \ldots, c_m, \lambda_h, \lambda_{h+1}, \ldots, \lambda_{n-i}),
\]
and remove $c$ from $\sigma$ to obtain $\sigma'$ and the resulting set $S'$. Then $I$ maps
\[
(\lambda, S, \sigma) \mapsto ( \lambda', S', \sigma').
\]

Otherwise assume $h<l_1$ ($h$ cannot be equal to $l_1$, for then it would also be equal to $l_2$, contradicting the assumption on the maximality of $h$).
Remove the substring $C(\lambda)$ from $\lambda$ to obtain $\lambda'$ and adjoin $C(\lambda)$ as a cycle to $\sigma$ to obtain $\sigma'$ and the resulting set $S'$. Then $I$ maps
\[
( \lambda, S, \sigma) \mapsto ( \lambda', S', \sigma').
\]

It is straightforward to check that $I$ is an involution.
This completes the proof. 
\end{proof}

\section{Gr\"obner basis computations}\label{groebner}

We consider other elements of the form 
\[
z(\mathrm{fern}_{d,n}, \mu)
\]
for certain $d, n$ and $\mu$. We have 
From Theorem \ref{t membership}
\[
z(\mathrm{fern}_{1,n}, \mu) \in J_{1,n}
\]
for all $n$ and $\mu$. (Here $\mu$ is labeling of the root and the one leaf vertex.) For the following $d$ and $n$ we have used Gr\"obner bases implemented by Mathematica \cite{Wolfram} and Singular \cite{Decker} to determine membership in $J_{d,n}$ or $\sqrt{J_{d,n}}$.

For $n=2$, we have 
\[
z(\mathrm{fern}_{d,2}, \mu) \in J_{1,2}\, \, \, d=2,3
\]
for all root-leaf labelings $\mu$. 

We denote a root-leaf labeling of $\mathrm{fern}_{d,n}$ by 
\[
(r,t_0, \ldots, t_{n-2}, t_{n-1})
\]
where $r$ is the label of the root, $t_i$ is a $(d-1)$-tuple of the labels of the vertex $v_i$ on the leftmost path for $0 \leq i \leq n-2$, and $t_{n-1}$ is a $d$-tuple of the labels of the next-to-last vertex $v_{n-1}$ on the leftmost path. 

For $n=3$ and $2\leq d\leq 4$ we have 
\[
z(\mathrm{fern}_{d,3}, \mu) \in J_{1,3}
\] 
for all for all root-leaf labelings $\mu$ except 
\begin{align*}
(1,(2), (3), (1,1)) \text{ and }  (1,(3), (2), (1,1))  \notin J_{2,3} \\ 
 (1,(2,2), (3,3,), (1,1,1)) \text{ and } (1,(3,3), (2,2), (1,1,1))  \notin J_{3,3}\\ 
  (1,(2,2,2), (3,3,,3), (1,1,1,1)) \text{ and } (1,(3,3,3), (2,2,2), (1,1,1,1))  \notin J_{4,3}
\end{align*}
However we do have 
\begin{align*}
z(\mathrm{fern}_{d,3}, (1 ,(2), (3), (1,1)))+  z(\mathrm{fern}_{d,3}, (1 ,(3), (2), (1,1))) \in  J_{1,3}\\ 
z(\mathrm{fern}_{d,3}, (1 ,(2,2), (3,3), (1,1,1)))+  z(\mathrm{fern}_{d,3}, (1 ,(3,3), (2,2), (1,1,1))) \in  J_{3,3}\\ 
z(\mathrm{fern}_{d,3}, (1 ,(2,2,2), (3,3,3), (1,1,1,1)))+  z(\mathrm{fern}_{d,3}, (1 ,(3,3,3), (2,2,2), (1,1,1,1))) \in  J_{4,3}
\end{align*}
and 
\begin{align*}
z(\mathrm{fern}_{d,3}, (1 ,(2), (3), (1,1)))^2 \in  J_{1,3}\\ 
z(\mathrm{fern}_{d,3}, (1 ,(2,2), (3,3), (1,1,1)))^2 \in  J_{3,3}\\ 
z(\mathrm{fern}_{d,3}, (1 ,(2,2,2), (3,3,3), (1,1,1,1)))^2 \in  J_{4,3}
\end{align*}

If we assume that the matrix $a$ satisfies $A^2 = 0$, then let $I_{\mathrm{nil-2}}$ denote the ideal of $R$ generated by elements of the form
\[
(A^2)_{i,j}
\]
for $1\leq i,j\leq n$. Then 
\begin{align*}
z(\mathrm{fern}_{d,3}, (1 ,(2), (3), (1,1)))  \in J_{2,3} \cup I_{\mathrm{nil-2}} \\ 
\end{align*}

Let $I_{\mathrm{char}}$ denote the non-constant coefficients of the characteristic polynomial of $A$. Then 
\begin{align*}
z(\mathrm{fern}_{d,3}, (1 ,(2), (3), (1,1)))  \in J_{2,3} \cup I_{\mathrm{char}} \\ 
\end{align*}

\section{Further work}\label{Further work}
\begin{itemize}

\item See if ideal membership results can be obtained for $n=2$ and all $d$. 

\item Consider the maps $f(x) = x+H$ where the component functions of $H$ are linear forms raised to different degrees.

\item  See if there are corresponding results for the different differential operators of the General Vanishing Conjecture. 

\item Consider ideal membership questions using the Jacobian condition along with the nilpotent assumptions of Dru\.zkowski. 

\item For a matrix $A$, see how to express the coefficients of the characteristic polynomial or their powers in terms of the entries of $A^k$. 
 
\end{itemize}

\end{document}